\newcommand*\patchAmsMathEnvironmentForLineno[1]{%
  \expandafter\let\csname old#1\expandafter\endcsname\csname #1\endcsname
  \expandafter\let\csname oldend#1\expandafter\endcsname\csname
  end#1\endcsname
  \renewenvironment{#1}%
     {\linenomath\csname old#1\endcsname}%
     {\csname oldend#1\endcsname\endlinenomath}}%
\newcommand*\patchBothAmsMathEnvironmentsForLineno[1]{%
  \patchAmsMathEnvironmentForLineno{#1}%
  \patchAmsMathEnvironmentForLineno{#1*}}%
\newcommand\enn{\mathbb N}
\newcommand\err{\mathbb R}
\newcommand{\eps}{\varepsilon}
\newcommand{\HB}{\text{H{\kern -0.35em}B}}
\newcommand{\vertiii}[1]{{\left\vert\kern-0.25ex\left\vert\kern-0.25ex\left\vert#1\right\vert\kern-0.25ex\right\vert\kern-0.25ex\right\vert}}
\DeclareMathOperator{\cspann}{\overline{span}}
\title{Two properties of M\"{u}ntz spaces}
\author[T.~A.~Abrahamsen]{Trond A.~Abrahamsen}
\address{Department of Mathematics, University of
  Agder, Postboks 422, 4604 Kristiansand, Norway.}
\email{trond.a.abrahamsen@uia.no}
\urladdr{http://home.uia.no/trondaa/index.php3}
\author[A.~Leraand]{Aleksander Leraand}
\email{aleksander9109@gmail.com}
\author[A.~Martiny]{Andr{\'e} Martiny}
\email{andremartiny91@gmail.com}
\author[O.~Nygaard]{Olav Nygaard}
\email{Olav.Nygaard@uia.no}
\urladdr{http://home.uia.no/olavn/}
\keywords {M\"{u}ntz space; Asymptotically isometric copy of $c_0$;
  Octahedral space; Diameter 2 properties} 
\subjclass[2010]{46E15, 46B04, 46B20, 26A99}
\newtheorem{thm}{Theorem}[section]
\newtheorem{lem}[thm]{Lemma}
\newtheorem{cor}[thm]{Corollary}
\theoremstyle{definition}
\newtheorem{defn}[thm]{Definition}
\theoremstyle{remark}
\newtheorem{rem}[thm]{Remark}
\begin{document}

\begin{abstract}
  We show that M\"{u}ntz spaces, as subspaces of $C[0,1]$, contain
  asymptotically isometric copies of $c_0$ and that their dual spaces are octahedral.  
\end{abstract}

\maketitle

\section{Introduction}\label{sec:intro}

  Let $\Lambda = (\lambda_k)_{k=0}^\infty$ be a strictly increasing sequence of
  non-negative real numbers and let $M(\Lambda) =
  \cspann\{t^{\lambda_k}\}_{k=0}^\infty \subset C[0,1]$ where $C[0,1]$
  is the space of real valued continuous functions on $[0,1]$ endowed with the
  $\max$-norm. We will call $M(\Lambda)$ a M{\"u}ntz space provided
  $\sum_{k=1}^\infty 1/\lambda_k < \infty$. The name is
  justified by M\"{u}ntz' wonderful discovery that if $\lambda_0
  = 0$ then $M(\Lambda) = C[0,1]$ if and only if $\sum_{k=1}^\infty
  1/\lambda_k=\infty$. 

  It is well known that $C[0,1]$ contains isometric copies of $c_0$ (see
  e.g. \cite[p.~86]{AlKal} how to construct them) and that its dual
  space is isometric to an $L_1(\mu)$ space for some measure $\mu$. The
  aim of this paper is to demonstrate that M\"{u}ntz spaces inherit quite a bit of structure
  from $C[0,1]$ in that they always contain
  asymptotically isometric copies of $c_0$, and that their dual spaces are always
  octahedral. (An $L_1(\mu)$ space is octahedral. See below for an
  argument.) Let us proceed by recalling the definitions of these two
  concepts and put them into some context.

\begin{defn}{\cite[Theorem~2]{MR1606342}}\label{def:ascnull} 
 A Banach space $X$ is said to contain an \emph{asymptotically isometric
 copy of $c_0$} if there exist a sequence $(x_n)_{n=1}^\infty$ in $X$ and constants
 $0 < m < M < \infty$ such that for all sequences 
 $(t_n)_{n=1}^\infty$ with finitely many non zero terms 
 \begin{align*}
   m \sup_n |t_n|
		\leq \left\| \sum_n t_n x_n \right\|
		\leq M \sup_n |t_n|,
 \end{align*}
  and
 \begin{align*}
  \lim_{n\rightarrow \infty} \|x_n\| = M. 
 \end{align*}
\end{defn}

  R.~C. James proved a long time ago (see
  \cite{James}) that $X$ contains an almost isometric copy of $c_0$ as
  soon at is contains a copy of $c_0$. Note that containing an
  asymptotically isometric copy of $c_0$ is a stronger property, see
  e.g. \cite[Example~5]{MR1606342}. 

\begin{defn}\label{def:octa} A Banach space $X$ is said to be
  \emph{octahedral} if for any finite-dimensional subspace $F$ of $X$ and
  every $\varepsilon>0$, there exists $y\in S_X$ with \[\|x + y\|\geq
  (1-\varepsilon)(\|x\| + 1)\:\:\mbox{for all}\:\:x\in F.\]
\end{defn}

  This concept was introduced by Godefroy in
  \cite{G-octa}, and there also the following result can be found on
  page 12 (see also \cite{HLP} for a proof of it):

\begin{thm}\label{thm:gd-coco}
  Let $X$ be a Banach space. Then $X^*$ is octahedral if and only if
  every finite convex combination of slices of $B_X$ has diameter 2.
\end{thm}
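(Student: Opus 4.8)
The plan is to prove both implications directly, using throughout the standard description of a slice $S(\xs,\alpha)=\{x\in B_X : \xs(x)>1-\alpha\}$ with $\xs\in S_{\Xs}$ and $\alpha>0$, together with the trivial observation that any finite convex combination of slices is contained in $B_X$ and so has diameter at most $2$. Hence in both directions it suffices, for each $\eps>0$, to exhibit two points of the combination that are more than $(2-\eps)$ apart. The mechanism linking the two sides is that a functional $\ys\in S_{\Xs}$ with $\|\xs+\ys\|$ close to $2$ detects, via Hahn--Banach, a point $x\in B_X$ on which $\xs$ and $\ys$ are simultaneously close to $1$.

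For the implication that octahedrality of $\Xs$ forces diameter $2$, I would fix a combination $C=\sum_{i=1}^n\lambda_i S(\xsi,\alpha_i)$ and $\eps>0$, and apply Definition~\ref{def:octa} to the finite-dimensional subspace $F=\spann\{\xsi : 1\le i\le n\}\subseteq\Xs$. This produces a single $\ys\in S_{\Xs}$ with $\|\xs+\ys\|\ge(1-\eps)(\|\xs\|+1)$ for all $\xs\in F$. The key point is that $F$ contains both $\xsi$ and $-\xsi$, so that $\|\xsi+\ys\|$ and $\|\xsi-\ys\|$ are each at least $2-2\eps$. Choosing near-norming points gives $z_i^+,z_i^-\in B_X$ with $\xsi(z_i^\pm)$ close to $1$ (so $z_i^\pm\in S(\xsi,\alpha_i)$ once $\eps$ is small) while $\ys(z_i^+)$ is close to $1$ and $\ys(z_i^-)$ close to $-1$. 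Then $u=\sum_i\lambda_i z_i^+$ and $v=\sum_i\lambda_i z_i^-$ lie in $C$, and $\|u-v\|\ge\ys(u-v)=\sum_i\lambda_i\bigl(\ys(z_i^+)-\ys(z_i^-)\bigr)$ is within a fixed multiple of $\eps$ of $2$; letting $\eps\to0$ yields $\diam C=2$.

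For the converse I would extract the octahedral functional from the diameter hypothesis. Given a finite-dimensional $F\subseteq\Xs$ and $\eps>0$, fix a finite $\eta$-net $\xs_1,\dots,\xs_n$ of the unit sphere $S_F$ and consider the equal-weight combination $C=\frac1n\sum_{i=1}^n S(\xsi,\alpha)$ for small $\alpha$. By hypothesis $\diam C=2$, so there are $u=\frac1n\sum_i u_i$ and $v=\frac1n\sum_i v_i$ in $C$ and a $\ys\in S_{\Xs}$ with $\ys(u-v)>2-\delta$. Since each $\ys(u_i-v_i)\le 2$ while their average exceeds $2-\delta$, every term must exceed $2-n\delta$, forcing $\ys(u_i)>1-n\delta$; as $u_i\in S(\xsi,\alpha)$ also gives $\xsi(u_i)>1-\alpha$, we obtain $(\xsi+\ys)(u_i)>2-\alpha-n\delta$, so the same $\ys$ satisfies $\|\xsi+\ys\|>2-\alpha-n\delta$ at every net point simultaneously.

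The remaining, and to my mind most delicate, step is to upgrade this estimate from the finite net to the full subspace form of Definition~\ref{def:octa}. Using the net one passes from the $\xsi$ to an arbitrary $u^*\in S_F$ at a further cost of $\eta$, and then for general $\xs\in F$ one writes $u^*=\xs/\|\xs\|$ and chooses $x\in B_X$ nearly norming $\|u^*+\ys\|$; this forces $u^*(x)$ and $\ys(x)$ both close to $1$, whence $(\xs+\ys)(x)\ge\|\xs\|\,u^*(x)+\ys(x)$ is close to $\|\xs\|+1$, giving $\|\xs+\ys\|\ge(1-\eps')(\|\xs\|+1)$ once $\alpha,\delta,\eta$ are coordinated so that $\alpha+n\delta+\eta<\eps'$. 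The main obstacle is precisely this bookkeeping: keeping the slice parameter $\alpha$, the diameter slack $\delta$, and the net radius $\eta$ synchronized so that one $\ys\in S_{\Xs}$ meets the octahedral inequality uniformly over all of $F$ rather than only on the finite net and at norm-one vectors.
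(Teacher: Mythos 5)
Your proof is correct in both directions. One point of comparison to flag first: the paper itself does \emph{not} prove Theorem~\ref{thm:gd-coco}; it quotes the result from Godefroy's paper \cite{G-octa} and points to \cite{HLP} for a proof, so there is no in-paper argument to measure you against. Your argument is essentially the standard proof (the one in \cite{HLP}), and it checks out. In the forward direction, applying octahedrality to $F=\spann\{\xsi\}$ and using that $F$ contains $\pm\xsi$ correctly yields $\|\xsi\pm\ys\|\ge 2-2\eps$, and the near-norming points $z_i^{\pm}$ then land in the slices once $\eps$ is small relative to $\min_i\alpha_i$, giving $\|u-v\|\ge\ys(u-v)\ge 2-6\eps$. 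In the converse, the two key moves are sound: the averaging argument (each $\ys(u_i-v_i)\le 2$ while the average exceeds $2-\delta$ forces every term above $2-n\delta$) is exactly right, and your ``bookkeeping'' step is not circular because the net size $n$ is determined by $\eta$ and $F$ alone, so $\delta$ can legitimately be chosen last, after $n$ is known; the passage from net points to all of $S_F$ by the triangle inequality, and then to arbitrary $\xs\in F$ by normalizing and near-norming $\|\xs/\|\xs\|+\ys\|$ (the zero vector being trivial), closes the argument with constant $1-(\alpha+n\delta+\eta+\tau)$. The last step, upgrading from finitely many norm-one functionals to a finite-dimensional subspace, is precisely the known equivalence between the ``finite set'' and ``subspace'' formulations of octahedrality, which you have in effect reproved inline; that is the only place where your write-up compresses a lemma that the literature states separately, and your sketch of it is adequate.
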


  By a slice of $B_X$ we mean a set of the form \[S(x^*,\eps):=\{x \in B_X:
  x^*(x) > 1 - \eps, \eps >0, x^* \in S_{X^*}\}.\]

  When we show that the dual of M\"{u}ntz spaces are octahedral we
  will use Theorem \ref{thm:gd-coco} and establish the equivalent
  property stated there. Note that an $L_1(\mu)$ space is
  octahedral. Indeed, the bidual of such a space can be written
  $L_1(\mu)^{**} = L_1(\mu) \oplus_1 X$  for some subspace $X$ of
  $L_1(\mu)^{**}$ (see e.g. \cite[IV.~Example~1.1]{HWW}). From here the
  octahedrality of $L_1(\mu)$ is a straightforward application of the
  Principle of Local Reflexivity.

  We do not know of much research in the direction of our results. But
  we would like to mention a paper of P. Petr\'{a}\v{c}ek (\cite{Petr}),
  where he demonstrates that M\"{u}ntz spaces are never reflexive and
  asks whether they can have the Radon-Nikod\'{y}m property. Since the
  Radon-Nikod\'{y}m property implies the existence of slices of
  arbitrarily small diameter,  we now understand that M\"{u}ntz spaces
  rather belong to the ``opposite world'' of Banach spaces.

\section{Results}
\begin{defn}
  We will say that a strictly increasing sequence of non-negative real
  numbers $(\lambda_k)_{k=0}^\infty$ has the \emph{Rapid Increase
    Property (RIP) } if $\lambda_{k+1} \geq 2\lambda_k$ for every $k \ge 0$. 
  
  We will call a function of the form
  \begin{align*}
    p(x) = x^\alpha - x^\beta,
  \end{align*}
  where $0 \le \alpha < \beta,$ a \emph{spike function}. 
\end{defn}

\begin{rem}\label{rem:spike}
  If $\alpha > 0$ it should be clear that any spike function $p$
  satisfies $p(0) = p(1) = 0,$ attains its norm on a unique point
  $x_p$, is strictly increasing on $[0, x_p],$ and strictly decreasing on $[x_p, 1].$
\end{rem}

We will need the following result below.
	
\begin{lem}\label{thm:lemmaB}
  Let $(\lambda_k)_{k=0}^\infty$ be an RIP sequence and
  $(p_k)_{k=0}^\infty$ the sequence of corresponding spike
  functions $p_k(x) = x^{\lambda_k} - x^{\lambda_{k+1}}$. Then $\inf_k
  \|p_k\| \ge 1/4.$ Moreover, the sequence
  $(p_k/\|p_k\|)_{k=1}^\infty$ converges to $0$ weakly in $M(\Lambda)$.
\end{lem}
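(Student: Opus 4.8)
The plan is to treat the two assertions separately: the first is a pointwise estimate of the sup-norm of each spike function, and the second is a standard weak-convergence argument in $C(K)$.

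For the lower bound $\inf_k \|p_k\| \ge 1/4$, my idea is to avoid computing the exact maximum of $p_k$ and instead evaluate $p_k$ at a cleverly chosen point. Writing $a = \lambda_k$ and $b = \lambda_{k+1}$, note first that $p_k \ge 0$ on $[0,1]$, so $\|p_k\| = \max_{x\in[0,1]} p_k(x)$. If $a = 0$ (which can only happen for $k=0$), then $p_0(x) = 1 - x^{\lambda_1}$ and $\|p_0\| = p_0(0) = 1$. If $a > 0$, I would plug in the point $x_k = 2^{-1/a}$, for which $x_k^{a} = 1/2$ and $x_k^{b} = 2^{-b/a}$. The RIP hypothesis $b \ge 2a$ gives $b/a \ge 2$, hence $x_k^b \le 1/4$, and therefore
\[
\|p_k\| \ge p_k(x_k) = \tfrac12 - 2^{-b/a} \ge \tfrac12 - \tfrac14 = \tfrac14 .
\]
This settles the first claim, and it is the only place where the RIP enters quantitatively.

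For the weak convergence I would first record that $1/4 \le \|p_k\| \le 1$, the upper bound because $0 \le p_k(x) \le x^{\lambda_k} \le 1$ on $[0,1]$; thus each $p_k/\|p_k\|$ has norm $1$ and the sequence is norm-bounded. Next I would establish pointwise convergence to $0$ on $[0,1]$: the RIP forces $\lambda_k \ge 2^{k-1}\lambda_1 \to \infty$, so for fixed $x \in [0,1)$ both $x^{\lambda_k}$ and $x^{\lambda_{k+1}}$ tend to $0$ (they are identically $0$ at $x=0$ since $\lambda_k>0$ for $k\ge 1$), while at $x = 1$ we have $p_k(1) = 0$. Combined with $\|p_k\| \ge 1/4$ this yields $0 \le p_k(x)/\|p_k\| \le 4\,p_k(x) \to 0$ for every $x\in[0,1]$.

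Finally I would invoke the classical criterion that a norm-bounded sequence in $C[0,1]$ converging pointwise to $0$ converges weakly to $0$: this follows from the Riesz representation of $C[0,1]^*$ by finite signed measures together with the Dominated Convergence Theorem, the constant bound $1$ being integrable against any finite measure. Since $M(\Lambda)$ is a subspace of $C[0,1]$, its weak topology is the restriction of that of $C[0,1]$ by Hahn--Banach, so weak convergence in $C[0,1]$ transfers to weak convergence in $M(\Lambda)$. I expect the only genuinely space-specific step to be the evaluation trick giving the $1/4$ bound; the weak-convergence part is routine once boundedness and pointwise convergence are in hand, and the main temptation to resist is computing the exact norm by calculus, which the choice $x_k = 2^{-1/\lambda_k}$ sidesteps entirely.
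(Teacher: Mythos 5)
Your proposal is correct and follows essentially the same route as the paper: the paper bounds $p_k$ below by the minorant $r_k(x)=x^{\lambda_k}-x^{2\lambda_k}$ (using RIP) and evaluates it at the point where $x^{\lambda_k}=1/2$, which is exactly your evaluation point $x_k=2^{-1/\lambda_k}$ giving $1/2-2^{-\lambda_{k+1}/\lambda_k}\ge 1/4$, and it likewise deduces weak convergence from norm-boundedness plus pointwise convergence to $0$. Your write-up merely makes explicit the routine details (dominated convergence via Riesz representation, Hahn--Banach restriction to the subspace, and the $\lambda_0=0$ edge case) that the paper leaves implicit.
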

	
\begin{proof}
  We want to find the norm of the spike function defined by
  \begin{align*}
     p_k(x)= x^{\lambda_{k}} - x^{\lambda_{k+1}}.
  \end{align*}
   Observe that $r_k(x):= x^{\lambda_k}-x^{2\lambda_k} \le p_k(x)$ for
   all $x\in[0,1]$. Now, by standard calculus, $r_k$ attains its
   maximum at $x_k$ where ${x_k}^{\lambda_k}=\frac{1}{2}$. Thus
   \[\|p_k\|\geq r_k(x_k) = \frac{1}{2}-(\frac{1}{2})^2=\frac{1}{4}.\] 

   As $(p_k)_{k=1}^\infty$ converges pointwise to $0$ and
   $\inf_k\|p_k\| \ge 1/4,$ the sequence $(p_k/\|p_k\|)_{k=1}^\infty$ converges
   pointwise to $0$ and thus weakly to $0$ as it is bounded. 
\end{proof}	

\begin{rem}\label{rem:nap}
  By standard calculus one can show that the point at which $p_k$ in
  Lemma \ref{thm:lemmaB}  obtains its norm is $\bar{x}_k = 
  (\lambda_k/\lambda_{k+1})^{1/(\lambda_{k+1}- \lambda_k)}$. For
  sufficiently large $\lambda_k$ (e.g. for $\lambda_k > 3$) it is
  straightforward to show that
  \[\bar{x}_k \ge 1/( \lambda_{k+1} -\lambda_k )^{1/(\lambda_{k+1}
      -\lambda_k)} = : y_k.\] $(y_k)$ is strictly monotone, and converges to 1.
 
\end{rem}
	
\begin{thm}
 The dual of any Müntz space is octahedral.
\end{thm}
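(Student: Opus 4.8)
The plan is to verify the equivalent condition supplied by Theorem \ref{thm:gd-coco}: that every finite convex combination $C = \sum_{i=1}^n \alpha_i S(x_i^*,\eps_i)$ of slices of $B_{M(\Lambda)}$ has diameter $2$. Since $\sum_k 1/\lambda_k < \infty$ forces $\lambda_k \to \infty$, I would first pass to a subsequence $(\mu_j)_j$ of $(\lambda_k)_k$ with $\mu_{j+1} \ge 2\mu_j$, so that the normalized spikes $q_j := (t^{\mu_j} - t^{\mu_{j+1}})/\|t^{\mu_j} - t^{\mu_{j+1}}\|$ lie in $M(\Lambda)$ and, exactly as in Lemma \ref{thm:lemmaB} (whose argument ``bounded $+$ pointwise null $\Rightarrow$ weakly null'' is insensitive to the ambient subspace), satisfy $\|q_j\| = 1$ and $q_j \to 0$ weakly in $M(\Lambda)$. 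I would record three further elementary features: $q_j \ge 0$ on $[0,1]$; since $\|t^{\mu_j}-t^{\mu_{j+1}}\| \ge 1/4$ one has $q_j(x) \le 4x^{\mu_j}$, so $q_j \to 0$ uniformly on each $[0,1-\delta]$; and by Remark \ref{rem:nap} the peak of $q_j$ tends to $1$.

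Given the slices, I would choose $f_i \in B_{M(\Lambda)}$ with $x_i^*(f_i) > 1 - \eps_i/4$. The idea is to perturb each $f_i$ by the \emph{same} spike, but with an asymmetric amplitude tuned to the boundary value $f_i(1)$, so that the two perturbations differ by exactly $2q_j$. Concretely I set
\[ g_i = f_i + (1 - f_i(1))\,q_j, \qquad h_i = f_i - (1 + f_i(1))\,q_j, \]
so that $g_i - h_i = 2q_j$ for every $i$, and hence $g := \sum_i \alpha_i g_i$ and $h := \sum_i \alpha_i h_i$ satisfy $g - h = 2q_j$, a vector of norm $2$.

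The main work — and the step I expect to be the real obstacle — is to check that $g_i$ and $h_i$ are almost admissible, i.e.\ lie in $(1+\eps')B_{M(\Lambda)}$ and still nearly maximize $x_i^*$. For the norm I would use that $1 - f_i(1) \ge 0$ and $1 + f_i(1) \ge 0$ (as $|f_i(1)|\le 1$): on $[1-\delta,1]$ continuity gives $|f_i - f_i(1)| < \delta'$ while $0 \le q_j \le 1$, so $g_i$ ranges in $[f_i(1)-\delta',\,1+\delta']$ and $h_i$ in $[-1-\delta',\,f_i(1)+\delta']$, whereas on $[0,1-\delta]$ the uniform smallness $q_j(x) \le 4(1-\delta)^{\mu_j}$ keeps $|g_i|, |h_i| \le 1 + o(1)$. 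Thus $\|g_i\|, \|h_i\| \le 1 + \eps'$ with $\eps' \to 0$ as $\delta \to 0$ and $j \to \infty$. For the slice condition, weak nullity gives $x_i^*(q_j) \to 0$, so for $j$ large $x_i^*(g_i), x_i^*(h_i) > 1 - \eps_i/2$ for all $i$ simultaneously; here it is essential that a single spike works for the finitely many functionals at once.

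Finally I would renormalize, replacing $g_i, h_i$ by $g_i/(1+\eps')$ and $h_i/(1+\eps')$; for $\eps'$ small these lie in $S(x_i^*,\eps_i)$, so $g/(1+\eps'), h/(1+\eps') \in C$ and their difference is $2q_j/(1+\eps')$, of norm $2/(1+\eps')$. Letting $\eps' \to 0$ yields $\diam C \ge 2$, and since $C \subseteq B_{M(\Lambda)}$ gives the reverse inequality, $\diam C = 2$. By Theorem \ref{thm:gd-coco} the dual $M(\Lambda)^*$ is octahedral.
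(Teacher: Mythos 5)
Your proposal is correct, and it shares the paper's skeleton: verify Godefroy's criterion (Theorem \ref{thm:gd-coco}), perturb elements of the slices by normalized spike functions coming from an RIP subsequence, use asymmetric amplitudes $1-(\cdot)$ and $1+(\cdot)$ so the two perturbations differ by twice a norm-one spike, keep the slice condition via weak nullity of the spikes (Lemma \ref{thm:lemmaB}), and renormalize by $1/(1+\eps')$. The genuine difference is where you anchor the amplitude, and it changes the execution of the hardest step. The paper anchors at the spike's peak value $g^j(x_k)$; its norm control then requires tracking the peak locations (Remark \ref{rem:nap}), uniform continuity of the $g^j$ on the moving intervals $(a_k,b_k)$ where the spike exceeds $\eps$, and a sign-by-sign case analysis, and the diameter lower bound comes from evaluating the difference of the two convex combinations at $x_k$. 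You anchor at the endpoint value $f_i(1)$, so that $g_i - h_i = 2q_j$ exactly, of norm exactly $2$; your norm control reduces to two clean observations: the elementary bound $q_j \le 4x^{\mu_j}$ makes the spikes uniformly small on $[0,1-\delta]$, while on $[1-\delta,1]$ continuity of the finitely many $f_i$ at the single point $1$ plus the tuned amplitudes trap $g_i$ in $[f_i(1)-\delta',\,1+\delta']$ and $h_i$ in $[-1-\delta',\,f_i(1)+\delta']$, with no case analysis. Your version is also slightly tighter at a point the paper glosses over: by starting with $x_i^*(f_i) > 1-\eps_i/4$ you reserve in advance the margin needed for the renormalized functions $g_i/(1+\eps')$, $h_i/(1+\eps')$ to stay inside the slices, whereas the paper concludes $\tfrac{1}{1+\eps}h_k^{j\pm} \in S(x_j^*,\eps_j)$ from weak openness of $U_j$ alone, which strictly speaking also needs $\eps$ small relative to the (unquantified) slack of $g^j$ in its slice. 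In short: same strategy and same key lemma, but your norm estimate is more elementary and your diameter bound is exact rather than obtained by pointwise evaluation.
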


\begin{proof}
  Let $M(\Lambda)$ be a M{\"u}ntz space. Let
  \begin{align*}
    C = \sum_{j=1}^n \mu_j S(x_j^*, \eps_j),
  \end{align*}
  where $\sum_{j=1}^n \mu_j = 1, \mu_j > 0$, and $S(x_j^*,
  \eps_j), 1\leq j\leq n$, is a slice of $B_{M(\Lambda)}.$
  We will show that the diameter of $C$ is 2 (cf. Theorem
  \ref{thm:gd-coco}). To this end, start with some $f \in C$ and write
  $f = \sum_{j=1}^n \mu_j g^j $, where $g^j \in S(x_j^*, \eps_j)$.  Let
  $(\lambda_k)_{k=0}^\infty$ be an RIP subsequence of $\Lambda$ and put
  \begin{align*}
    h_k^{j+} &= g^j + (1 - g^j(x_k))\frac{p_k}{\|p_k\|}\\
    h_k^{j-} &= g^j - (1 + g^j(x_k))\frac{p_k}{\|p_k\|}
  \end{align*}
  where $(p_k)_{k=0}^\infty$ is the sequence of spike functions
  corresponding to $(\lambda_k)_{k=0}^\infty$ and $x_k$ the (unique)
  point where $p_k$ attains its norm.  We will prove that, for any
  $\varepsilon>0$, there  exists a $K=K(\varepsilon)$ such that
  whenever $k\geq K$ we have $\frac{1}{1 + \eps} h_k^{j+}, \frac{1}{1
    + \eps} h_k^{j-} \in  S(x_j^*, \eps_j)$ for every $1 \le j \le n$. Then,
  clearly
    \begin{align*}
      \frac{1}{1 + \eps} \sum_{j=1}^n \mu_j h_k^{j\pm} \in C,     
    \end{align*}
    and 
     \begin{align*}
       \left\| \frac{1}{1 + \eps} \sum_{j=1}^n \mu_j h_k^{j+} -\frac{1}{1 +
       \eps} \sum_{j=1}^n \mu_j h_k^{j-} \right\| \ge \frac{1}{1 +
       \eps}\left(\sum_{j=1}^n \mu_j
       [h_k^{j+}(x_k)-h_k^{j+}(x_k)]\right) = \frac{2}{1 + \eps}. 
    \end{align*}
    for all $k \geq K.$ Since $\eps$ is arbitrary, we can thus conclude
    that $C$ has diameter 2.
 
  To produce the $K = K(\varepsilon)$ above, note that $h_k^{j\pm}$ converges
  to $g^j$ pointwise, and thus weakly since the sequences are
  bounded. As $U_j:=\{x\in M(\Lambda):
  x_j^\ast(x)>1-\varepsilon\}$ is weakly open, each sequence
  $(h_k^{j\pm})_{k=0}^\infty$ enters $U_j$ eventually. Since there are
  only a finite number of sets $U_j$, this entrance is uniform. So, what
  is left to prove is that for $\varepsilon>0$ there exists $K$ such
  that $\|h_k^{j\pm}\| \leq 1+\varepsilon$ whenever $k\geq K$.
 
  Now, let $\eps > 0$. Combining Remark \ref{rem:spike}, Remark
  \ref{rem:nap}, that $(p_k/\|p_k\|)_{k=1}^\infty$ converges pointwise
  to $0$, and the continuity of $g^j$, we can find $K \in \enn$ such
  that for all $k \geq K$ there are points $0 < a_k <x_k < b_k < 1$ such that
  \begin{align*}
    \frac{p_k(x)}{\|p_k\|} > \eps &\Leftrightarrow x
                \in (a_k, b_k), \\\sup_{u, v \in (a_k, b_k)} |g^j(u)
                - g^j(v)| &< \eps, \hskip 1mm j=1, \ldots, n.              
  \end{align*}
  We will see that this $K$ does the job for the given
  $\varepsilon>0$: Let $k \geq K$ and suppose $x \not\in (a_k, b_k)$.
  Then
  \begin{align*}
   |h_k^{j+}(x)| = \left| g^j(x) + (1 - g^j(x_k))
    \frac{p_k(x)}{\|p_k\|} \right| \leq |g^j(x)| + 2 \eps \le 1 + 2\eps.
  \end{align*}
  If $x \in (a_k, b_k)$, observe that 
  \begin{align*}
    |h^{j+}(x)| &\le \left| g^j(x) + (1-
    g^j(x))\frac{p_k(x)}{\|p_k\|}\right| + |g^j(x) -
                  g^j(x_k)|\frac{p_k(x)}{\|p_k\|}\\
   & <   \left| g^j(x) + (1- g^j(x))\frac{p_k(x)}{\|p_k\|}\right| + \eps.
  \end{align*}
   Now, if $g^j(x) \geq 0,$ then
  \begin{align*}
   \left| g^j(x) + (1 - g^j(x))\frac{p_k(x)}{\|p_k\|} \right| \le
    g^j(x) + (1 - g^j(x)) = 1.
  \end{align*}
   If $g^j(x) < 0$ and $g^j(x) + (1 - g^j(x))p_k(x)/\|p_k\| \ge 0,$ then
   \begin{align*}
    \left| g^j(x) + (1 - g^j(x))\frac{p_k(x)}{\|p_k\|} \right| \le
     g^j(x) + (1 - g^j(x)) = 1.
   \end{align*}
   If $g^j(x) < 0$ and $g^j(x) + (1 - g^j(x))p_k(x)/\|p_k\| < 0,$ then
   \begin{align*}
    \left| g^j(x) + (1 -  g^j(x))\frac{p_j(x)}{\|p_k\|} \right| \le
     |g^j(x)| \le 1.
    \end{align*}
   In any case we have for $k \geq K$ and $x \in [0, 1]$ that $|h_k^{j+}(x)| \le 1 + 2
   \eps.$  The argument that $\|h_k^{j-}\| \leq 1+\varepsilon$ is similar.
\end{proof}

\begin{thm}
  M{\"u}ntz spaces contain asymptotically isometric copies of $c_0$.
\end{thm}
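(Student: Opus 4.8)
The plan is to verify Definition~\ref{def:ascnull} directly, by producing a sequence $(x_i)$ and a sequence $\eps_i \downarrow 0$ such that
\[
  \sup_i (1-\eps_i)|t_i| \;\le\; \Big\| \sum_i t_i x_i \Big\| \;\le\; \sup_i |t_i|
\]
for every finitely supported scalar sequence $(t_i)$, with $\|x_i\| \to 1$; this is exactly the asymptotically isometric $c_0$-structure (corresponding to $M=1$ and $m=1-\eps_1$ in the definition, the essential point being that the lower constant degenerates as $i\to\infty$). The building blocks are the normalized spike functions from Lemma~\ref{thm:lemmaB}: fix an RIP subsequence of $\Lambda$, form $p_k(x)=x^{\lambda_k}-x^{\lambda_{k+1}}$ and set $q_k=p_k/\|p_k\|$. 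By Remark~\ref{rem:spike} each $q_k\ge 0$ on $[0,1]$, $\|q_k\|=1$ with $q_k(\bar x_k)=1$ at its unique peak $\bar x_k$, and $q_k(1)=0$; by Remark~\ref{rem:nap} the peaks satisfy $\bar x_k\nearrow 1$; and, as recorded in the proof of Lemma~\ref{thm:lemmaB}, $q_k\to 0$ pointwise on $[0,1)$.

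First I would extract, by induction, a sufficiently sparse subsequence $(q_{k_i})$ that controls the interference of the bumps at one another's peaks, exploiting two complementary mechanisms. For a \emph{later} index evaluated at a \emph{fixed earlier} peak, $q_{k_i}(\bar x_{k_l})\le 4\,\bar x_{k_l}^{\lambda_{k_i}}\to 0$ as $i\to\infty$, since $\bar x_{k_l}<1$ is fixed and $\|p_{k_i}\|\ge 1/4$; for a \emph{fixed earlier} index evaluated at a \emph{later} peak, $q_{k_l}(\bar x_{k_i})\to 0$ as $i\to\infty$, since $\bar x_{k_i}\to 1$ and $q_{k_l}(1)=0$. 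Making the jumps large enough at each step, I can arrange
\[
  \sum_{i \ne i_0} q_{k_i}(\bar x_{k_{i_0}}) \;\le\; \delta_{i_0}
\]
for a prescribed sequence $\delta_{i_0}\downarrow 0$ with every $\delta_i\le 1/6$ (e.g.\ $\delta_i=6^{-1}2^{-i}$), and, by the same scale-separation, that the sets $\{q_{k_i}>\eta\}$ are pairwise disjoint and collapse toward $1$. I then set $x_i:=(1-\delta_i)\,q_{k_i}$, so that $\|x_i\|=1-\delta_i\to 1$.

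The lower estimate is then a direct evaluation. Given $(t_i)$, put $\eps_j:=3\delta_j$ and let $j^\ast$ maximize $(1-\eps_j)|t_j|$. Evaluating at $\bar x_{k_{j^\ast}}$ and using the interference bound together with $1-\delta_i\le 1$ gives
\[
  \Big\|\sum_i t_i x_i\Big\| \ge \Big|\sum_i t_i x_i(\bar x_{k_{j^\ast}})\Big|
  \ge (1-\delta_{j^\ast})|t_{j^\ast}| - \delta_{j^\ast}\sup_i|t_i|.
\]
Because $j^\ast$ maximizes $(1-\eps_j)|t_j|$, one has $|t_{j^\ast}|\ge (1-\eps_{m})\sup_i|t_i|$ where $m$ is the index of largest $|t_i|$; since $\eps_m\le 1/2$ this yields $\sup_i|t_i|\le 2|t_{j^\ast}|$, and substituting gives $\big\|\sum_i t_i x_i\big\|\ge (1-3\delta_{j^\ast})|t_{j^\ast}| = \sup_i(1-\eps_i)|t_i|$, as required.

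The main obstacle is the upper estimate, which I would reduce to the pointwise bound
\[
  \sum_i (1-\delta_i)\,q_{k_i}(x) \;\le\; 1 \qquad (x\in[0,1]),
\]
since once this holds, nonnegativity gives $\big|\sum_i t_i x_i(x)\big|\le \sup_i|t_i|\cdot\sum_i(1-\delta_i)q_{k_i}(x)\le \sup_i|t_i|$ at every $x$. Near a peak $\bar x_{k_{i_0}}$ the bound is immediate, as there the sum is at most $(1-\delta_{i_0})+\sum_{i\ne i_0}q_{k_i}\le(1-\delta_{i_0})+\delta_{i_0}=1$. The delicate point is to control the sum \emph{between} peaks and in the tails accumulating at $x=1$: I must show that, after passing to a sufficiently sparse subsequence, the normalized spikes live on geometrically separated scales $1-x\sim\lambda_{k_i}^{-1}$ near $1$, so that at each $x$ at most one bump is near its peak while the others contribute a total tending to $0$ as the sparsity grows. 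Quantifying this no-pile-up phenomenon — via $q_{k_i}(x)\le x^{\lambda_{k_i}}\bigl(1-x^{\lambda_{k_i+1}-\lambda_{k_i}}\bigr)/\|p_{k_i}\|$ and the rapid decay of $x^{\lambda_{k_i}}$ off the scale of the $i$-th bump — is the crux of the argument and dictates how fast the subsequence $(k_i)$ must grow.
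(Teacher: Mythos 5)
Your construction and lower estimate follow the paper's proof quite closely: the paper also takes an RIP subsequence, sets $f_n = (1 - 2^{-n})\,p_{k_n}/\|p_{k_n}\|$ (your $x_i = (1-\delta_i)q_{k_i}$ with different constants), selects $k_n$ inductively sparse, and obtains the lower bound by evaluating at the peak of a maximizing coordinate. Your version of that step is complete, and in fact yields the sharper ``standard'' asymptotically isometric inequality with $\eps_i \to 0$, whereas the paper settles for the cruder constant $m = 1/4$, which Definition~\ref{def:ascnull} permits. But the upper estimate, which you correctly identify as the crux, is exactly the part you do not prove: your induction controls interference only \emph{at the peaks} $\bar x_{k_{i_0}}$, and the pointwise bound $\sum_i (1-\delta_i)q_{k_i}(x) \le 1$ for \emph{all} $x \in [0,1]$ is left as a programme (``quantifying this no-pile-up phenomenon \dots\ dictates how fast the subsequence must grow''). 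As it stands this is a genuine gap: nothing in your stated hypotheses prevents several bumps from summing above $1$ at a point between peaks or in the tail accumulating at $1$, since they say nothing about off-peak values.

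The missing idea --- and the way the paper closes it --- is to control interference on superlevel sets rather than at peaks. Define $I_n := \{x : f_n(x) > 2^{-2n}\}$ (the paper's condition (iv)); by Remark~\ref{rem:spike} this is an interval around the peak, and off $I_n$ one gets $f_n \le 2^{-2n}$ \emph{everywhere} --- in particular between peaks and arbitrarily close to $1$ --- for free, so no scale-separation analysis of $1-x \sim \lambda_{k_i}^{-1}$ is needed at all. Choosing $k_n$ inductively so that the $I_n$ march rightward, are pairwise disjoint, and so that all earlier $f_j$ are $\le 2^{-2n}$ on $I_n$ (the paper's conditions (iii) and (v), achievable from Lemma~\ref{thm:lemmaB} and Remarks~\ref{rem:spike} and \ref{rem:nap}), the pointwise bound becomes a two-case computation: if $x \notin \cup_n I_n$ then $\sum_n f_n(x) \le \sum_n 2^{-2n} \le 1/3$; if $x \in I_{n'}$ then $\sum_n f_n(x) \le (1 - 2^{-n'}) + (n'-1)2^{-2n'} + 2^{-2n'} \le 1 - 2^{-2n'} < 1$, the slack $2^{-n'}$ in the scaling absorbing the total interference. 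Your own estimate $q_{k_i}(x) \le 4x^{\lambda_{k_i}}$ is enough to run this induction (it shows the left endpoint of the superlevel interval tends to $1$ as $k_i \to \infty$), so the sketch is completable; but the decisive device --- disjoint superlevel intervals with geometrically summable thresholds, which upgrades the off-interval bound from peak-only to global --- is absent from your proposal.
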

	
\begin{proof}
   We will construct a sequence $(f_n)_{n=1}^\infty \subset
   M(\Lambda)$ and pairwise disjoint intervals $I_n = (a_n, b_n) \subset
   [0,1]$ such that for all $n \in \enn$ 
   \begin{itemize}

      \item[(i)]$f_n (x) \geq 0$ for all $x \in [0,1],$
      \item[(ii)] $\|f_n\| = 1 - 1/2^n,$
      \item[(iii)] $b_n < a_{n+1}$
      \item[(iv)] $f_n(x) > 1/2^{2n} \Leftrightarrow x \in I_n,$
      \item[(v)] $f_n(x) < 1/2^{2m}$ whenever $m \ge n$ and $x \in I_m.$ 
    \end{itemize}
		
   To this end choose a subsequence of $\Lambda$
   with the RIP (which is possible as $\sum_{k=1}^\infty 1/\lambda_k <
   \infty$). For simplicity denote also this subsequence by
   $(\lambda_k)_{k=0}^\infty$. Let $(p_k)_{k=1}^\infty$ be its
   corresponding sequence of spike functions, and let $x_k$ be the
   (unique) point in $(0,1)$ where $p_k$ obtains its maximum. 
   
   Now, start by letting $k_1 = 1$ and put
   \begin{align*}
     f_1 = (1 - 1/2) \frac{p_{k_1}}{\|p_{k_1}\|}.
   \end{align*}
   Using continuity and properties of $p_1$, we can find an interval
   $I_1 = (a_1, b_1)$ such that $0 < a_1 < b_1 < 1$ and $f_1(x) >
   \frac{1}{2^2} \Leftrightarrow x \in I_1.$ By construction $f_1$ satisfies
   the conditions (i) - (iv). 

   To construct $f_2$ we use Lemma \ref{thm:lemmaB} and Remarks
   \ref{rem:spike} and \ref{rem:nap} to find
   $k_2 \in \enn$ and an interval $I_2= (a_2, b_2)$ with $b_1 < a_2 < b_2 < 1$ such that
   \begin{align*}
     x \in I_2 \Leftrightarrow p_{k_2}(x) &> \frac{1/2^4}{1 - 1/2^2}\|p_{k_2}\|, \\
     x \in I_2 \Rightarrow p_{k_1}(x) & \le \frac{1}{2^4}.
   \end{align*}
   Let
   \begin{align*}
     f_2 = (1 - 1/2^2)\frac{p_{k_2}}{\|p_{k_2}\|}.
   \end{align*}
   By construction $f_1$ now satisfies condition (v) for $m \le 2$ and $f_2$ satisfies
   conditions (i) - (iv). 

   To construct $f_3$ we use Lemma \ref{thm:lemmaB} and Remarks
   \ref{rem:spike} and \ref{rem:nap} again to find
   $k_3 \in \enn$ and an interval $I_3 = (a_3, b_3)$ with $b_2 < a_3 < b_3 < 1$ such that
   \begin{align*}
     x \in I_3 \Leftrightarrow p_{k_3}(x) &> \frac{1/2^6}{1 - 1/2^3}\|p_{k_3}\|, \\
     x \in I_3 \Rightarrow p_{k_j}(x) & \le \frac{1}{2^6}
                                               \hskip 2mm \text{ for }
                                               j = 1, 2.
   \end{align*}
   Let
   \begin{align*}
     f_3 = (1 - 1/2^3)\frac{p_{k_3}}{\|p_{k_3}\|}.
   \end{align*}
   By construction $f_1$ and $f_2$ now satisfy condition (v) for $m
   \le 3$ and $f_3$ satisfies conditions (i) - (iv). If we continue in
   the same manner we obtain a sequence $(f_n)_{n=1}^\infty \subset
   M(\Lambda)$ and a sequence of intervals $I_n = (a_n, b_n)$ which
   satisfies the conditions (i) - (v). 
   
   Now we will show that $(f_n)_{n=1}^\infty$
   satisfies the requirements of Definition \ref{def:ascnull}. To this
   end we need
   to find constants $0 < m < M < \infty$ such that given any sequence
   $(t_n)_{n=1}^\infty$ with finitely many non zero terms
   \begin{equation}\label{DLT-criteria1}
     m \sup_n |t_n|
    \leq \| \sum_n t_n f_n \|
    \leq M \sup_n |t_n|
   \end{equation}
   and
   \begin{equation}\label{DLT-criteria2}
     \lim_{n \rightarrow \infty} \|f_n\| = M
   \end{equation}
   We claim that \eqref{DLT-criteria1} and \eqref{DLT-criteria2} holds
   with $m = \frac{1}{4}$ and $M = 1.$ First observe that we have
   $\lim_{n \rightarrow \infty} \|f_n\| =
   1$ immediately from the requirements, so \eqref{DLT-criteria2}
   holds for $M = 1$. In order to prove the
   two inequalities in \eqref{DLT-criteria1}, let $(t_n)_{n=1}^\infty$ be an
   arbitrary sequence with finitely many non zero terms. First we will prove that $1/4
   \sup_n |t_n| \leq \left\| \sum_n t_n f_n \right\|$. We can assume
   by scaling that $\sup |t_n| = 1$. Since $(t_n)_{n=1}^\infty$ has
   finitely many non zero terms, its norm is attained at, say, $n = N$, i.e.
   $|t_N| = 1$. Put $x_N = x_{k_N}$ where $x_{k_N}$ is the point where
   $p_{k_N}$ and thus $f_N$ attains its norm. Then
   \begin{align*}
      \| \sum_{n \in \enn} t_n f_n \| & \geq | t_Nf_N(x_N)|  -
                                        |\sum_{n \not= N} t_n f_n(x_N)|\\
      &\ge 1 - \frac{1}{2^N} - \sum_{n \not=
                                        N} |f_n(x_N)| \\&> 1-
        \frac{1}{2^N} - \frac{1}{4} \ge \frac{1}{4}.
   \end{align*}
   We conclude that the left hand side of the inequality
   \eqref{DLT-criteria1} holds. Now we will
   show the right hand side of this inequality holds, i.e. we
   want to prove that $|\sum_n t_n f_n(x)| \leq 1$ for all $x \in
   [0,1]$. Since $f_n \ge 0$ for all $n=1, 2, \ldots,$ we may assume that
   every $t_n$ is positive. Now, if $x \not \in \cup_n (a_n, b_n)$, we have
   \begin{align*}
     \sum_n t_n f_n(x)
		\leq \sum_n f_n(x)
		\leq \sum_n \frac{1}{2^{2n}}
		\leq \frac{1}{3}.
   \end{align*}
   If, on the other hand $x \in (a_{n'}, b_{n'})$ for some $n' \in \enn,$ then
   \begin{align*}
     \sum_n t_n f_n(x)
		&\leq f_{n'}(x) + \sum_{n < n'} f_n(x) + \sum_{n > n'} f_n(x)\\
		&\leq 1 - \frac{1}{2^{n'}} + \frac{n' - 1}{2^{2n'}} + \frac{1}{2^{2n'}}\\
		& \le 1 + \frac{n' - 2^{n'}}{2^{2n'}} \le 1 -
                  \frac{1}{2^{2n'}} < 1.      
   \end{align*}
   These combined yields the right hand side of the inequality
   \eqref{DLT-criteria1}, so the proof is complete.
\end{proof}

  A Banach space $X$ contains an \emph{asymptotically isometric copy of
  $\ell_1$} if it contains a sequence  $(x_n)_{n=1}^\infty$ for which
  there exists a sequence $(\delta_n)_{n=1}^\infty$ in $(0,1)$,
  decreasing to $0$, and such that
  
  \begin{equation*}
     \sum_{n=1}^m(1 - \delta_n)|a_n| \le \|\sum_{n=1}^m a_n x_n\| \le
    \sum_{n=1}^m |a_n|
  \end{equation*}
   for each finite sequence $(a_n )_{n=1}^m$ in $\err$.  
   
  Merging (\cite[Theorem~2]{DJLT}) and \cite[Lemma 2.3]{ALNT} gives us
  that if either the Banach space $X$ contains an asymptotically
  isometric copy of $c_0$ or if $X^{\ast}$ is octahedral, then
  $X^{\ast}$ contains an asymptotically isometric copy of
  $\ell_1$. So, we have two ways of proving

\begin{cor}\label{kor:aiell1}
     $M(\Lambda)^{\ast}$ contains an asymptotically isometric copy of $\ell_1$.
\end{cor}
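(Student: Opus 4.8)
The plan is to observe that the corollary is an immediate consequence of the two theorems already established, combined with the cited implications recalled just above its statement. First I would set $X = M(\Lambda)$. We have shown both that $M(\Lambda)$ contains an asymptotically isometric copy of $c_0$ and that $M(\Lambda)^{\ast}$ is octahedral. By merging \cite[Theorem~2]{DJLT} with \cite[Lemma~2.3]{ALNT}, for an arbitrary Banach space $X$ either of these two properties forces $X^{\ast}$ to contain an asymptotically isometric copy of $\ell_1$. Applying this with $X = M(\Lambda)$ yields the claim directly.

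Since there are two independent routes to the conclusion, I would simply invoke whichever hypothesis is the more convenient, say the octahedrality of $M(\Lambda)^{\ast}$; both hypotheses have already been verified for Müntz spaces, so no further work on the space itself is required. In particular, nothing about the specific structure of $M(\Lambda)$ needs to be revisited here—the entire content has been absorbed into the two preceding theorems, and the corollary is a one-line deduction.

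For this reason there is no genuine obstacle in the corollary itself. The only points worth a moment's care are bookkeeping: one should confirm that the merged implication of \cite{DJLT} and \cite{ALNT} is stated for a \emph{general} Banach space $X$ (so that it legitimately applies to $X = M(\Lambda)$), and that the notion of an asymptotically isometric copy of $\ell_1$ invoked there agrees with the definition recalled immediately before the corollary. Granting these routine checks, the proof is complete.
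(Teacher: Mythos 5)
Your proposal is correct and matches the paper exactly: the paper itself treats this corollary as an immediate consequence of the two preceding theorems together with the merged citations \cite[Theorem~2]{DJLT} and \cite[Lemma~2.3]{ALNT}, noting explicitly that either route (asymptotically isometric copy of $c_0$ in $M(\Lambda)$, or octahedrality of $M(\Lambda)^{\ast}$) suffices. Your bookkeeping remarks about the generality of the cited implication are reasonable but add nothing beyond what the paper already assumes.
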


  Moreover, we have

\begin{cor}
    $M(\Lambda)^{\ast\ast}$ contains an isometrically isomorphic copy of $L_1[0,1]$.
\end{cor}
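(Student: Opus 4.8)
The plan is to obtain this as a purely dual consequence of Corollary~\ref{kor:aiell1}, rather than to exhibit the copy of $L_1[0,1]$ directly. Set $Z := M(\Lambda)^{\ast}$. By Corollary~\ref{kor:aiell1}, $Z$ contains an asymptotically isometric copy of $\ell_1$, so it suffices to invoke the general principle that an asymptotically isometric copy of $\ell_1$ in a Banach space $Y$ forces an isometric copy of $L_1[0,1]$ in $Y^{\ast}$ (a theorem of Dowling, Lennard and Turett). Applying this with $Y = Z = M(\Lambda)^{\ast}$, so that $Y^{\ast} = M(\Lambda)^{\ast\ast}$, then yields the statement immediately. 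This is the natural route, since the ingredients already assembled---an (asymptotically isometric) $c_0$ in $M(\Lambda)$, and hence, via \cite{DJLT}, an (asymptotically isometric) $\ell_1$ in $M(\Lambda)^{\ast}$---are exactly the hypotheses such a principle requires.

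I would deliberately avoid the tempting shortcut of pushing the embedding $c_0 \hookrightarrow M(\Lambda)$ up to its second adjoint to land $\ell_{\infty} = c_0^{\ast\ast}$ inside $M(\Lambda)^{\ast\ast}$. Although $\ell_{\infty}$ is isometrically universal for separable spaces and hence does contain an isometric copy of $L_1[0,1]$, the second adjoint of the embedding is only an isomorphism, with distortion governed by the constants $m, M$ of Definition~\ref{def:ascnull}; the resulting copy of $L_1[0,1]$ would in general be merely almost isometric. It is precisely the asymptotic feature $\delta_n \downarrow 0$ of the $\ell_1$-copy in the dual that can be averaged away in a weak$^{\ast}$ limit to produce an \emph{exact} isometry, which is why the argument is routed through the dual $\ell_1$ and not through the bidual $\ell_{\infty}$.

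The main obstacle, should one want a self-contained proof, is the cited principle itself. Starting from functionals $(\phi_n) \subset M(\Lambda)^{\ast}$ satisfying $\sum_{n} (1-\delta_n)|a_n| \le \| \sum_{n} a_n \phi_n \| \le \sum_{n} |a_n|$ with $\delta_n \downarrow 0$, I would index a dyadic (Haar/martingale) system by the dyadic subintervals of $[0,1]$ and realize its conditional-expectation structure inside $M(\Lambda)^{\ast\ast}$ using weak$^{\ast}$ cluster points of suitably normalized blocks of the $\phi_n$. The upper estimate gives a contraction $L_1[0,1] \to M(\Lambda)^{\ast\ast}$, while the condition $\delta_n \downarrow 0$ is what forces the lower estimate to tighten to equality in the limit, so that the map is norm preserving rather than merely bounded below. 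Checking that this weak$^{\ast}$ limiting procedure indeed closes the gap between the two displayed inequalities---turning an asymptotic isometry into an exact one---is the technical heart of the matter, and is exactly what the Dowling--Lennard--Turett result packages for us.
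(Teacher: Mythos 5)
Your proposal follows exactly the paper's own route: the paper deduces the corollary from Corollary~\ref{kor:aiell1} together with the theorem that a Banach space containing an asymptotically isometric copy of $\ell_1$ has a dual containing an isometric copy of $L_1[0,1]$, applied to $Y = M(\Lambda)^{\ast}$. The only slip is attribution --- that cited principle is due to Dilworth, Girardi and Hagler \cite[Theorem~2]{MR1751149}, not Dowling--Lennard--Turett --- but this is a citation matter, not a mathematical gap.
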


\begin{proof}
   This follows from Corollary~\ref{kor:aiell1} and \cite[Theorem~2]{MR1751149}.
\end{proof}

\end{document}